\newcommand{\C}{{\mathbb{C}}}          
\newcommand{\R}{{\mathbb{R}}}          
\newcommand{\Proj}{{\mathbb{P}}}       
\newcommand{\sol}{{\mathfrak{o}}}
\newcommand{\sul}{{\mathfrak{su}}}
\newcommand{\uni}{{\mathfrak{u}}}
\newcommand{\rr}{\rightarrow}
\newcommand{\lrr}{\longrightarrow}
\newcommand{\calT}{{\cal T}}
\newcommand{\calO}{{\cal O}}
\newcommand{\calri}{{{\cal R}^\xi}} 
\newcommand{\tm}{{{\cal T}_{M}}}
\newcommand{\SO}{{\mathrm{SO}}}
\newcommand{\SU}{{\mathrm{SU}}}
\newcommand{\na}{{\nabla}}
\newcommand{\dx}{{\mathrm{d}}}
\newcommand{\inv}[1]{{#1}^{-1}}
\newcommand{\papa}[2]{\frac{\partial#1}{\partial#2}}
\newcommand{\vol}{{\mathrm{vol}}}
\newcommand{\partialoverlinez}{{\partial_{\overline{z}}}}
\newcommand{\partialoverlinew}{{\partial_{\overline{w}}}}
\newcommand{\estrela}{{\boldsymbol{\star}}}
\newcommand{\cinf}[1]{{\mathrm{C}}^\infty_{#1}}
\newcommand{\mg}[1]{{g_{_{#1}}}}
\newtheorem{teo}{\textit{THEOREM}}[section]
\newtheorem{prop}{\textit{PROPOSITION}}[section]
\title{AN INVARIANT K\"AHLER METRIC ON THE TANGENT DISK BUNDLE OF A SPACE-FORM}
\author{R. ALBUQUERQUE}
\begin{document}

\maketitle

\markright{\sl\hfill {\small{A K\"ahler metric on the tangent disk bundle of a space-form}}     \hfill}

\vspace*{2mm}

\begin{abstract}

We find a family of K\"ahler metrics invariantly defined on the radius $r_0>0$ tangent disk bundle ${\calT_{M,r_0}}$ of any given real space-form $M$ or any of its quotients by discrete groups of isometries. Such metrics are complete in the non-negative curvature case and non-complete in the negative curvature case. If $\dim M=2$ and $M$ has constant sectional curvature $K\neq0$, then the K\"ahler manifolds ${\calT_{M,r_0}}$ have holonomy $\SU(2)$; hence they are Ricci-flat. For $M=S^2$, just this dimension, the metric coincides with the Stenzel metric on the tangent manifold ${\calT_{S^2}}$, giving us a new most natural description of this well-know metric.

\end{abstract}

\vspace*{2mm}

{\bf Key Words:} K\"ahler metric, tangent bundle, space-form.

\vspace*{1mm}

{\bf MSC 2010:} Primary: 32Q15, 53C55, Secondary: 32Q60, 53B35, 53C28, 58A32

\vspace{7mm}
\begin{center}
 \textbf{1. INTRODUCTION}
\end{center}
\setcounter{section}{1}

We here bring to light the remarkable Hermitian structure $\mg{\tm},\omega,J$ existing on the space $\calT_{M,r_0}$ and which has a K\"ahler manifold structure. The real manifold is the radius $r_0$ open disk bundle contained in $\calT_M$, the total space of the tangent bundle $TM\lrr M$ of any given constant sectional curvature $K$ Riemannian manifold $M$. Recurring to the canonical horizontal-plus-vertical decomposition of $T\calT_M$, we have
\begin{equation}\label{theKahlermetric0}
 \mg{\calT_{M}}=\sqrt{c_1+Kr^2}\,\pi^*g_{_M}+\frac{1}{\sqrt{c_1+Kr^2}}\,\pi^\estrela g_{_{M}}  
\end{equation}
where $r=\|u\|_{_M},\ u\in\calT_M$, and $c_1$ is any positive constant, $r_0=\sqrt{-c_1/K}$ for $K<0$ and $r_0=+\infty$ for $K\geq0$. The general almost Hermitian structure induces a K\"ahler metric if and only if the conditions on the above weights and base are fulfilled. We then try to study the properties and associated questions of the metric. Conferring the vast literature on the geometry of tangent bundles, we find that it is not completely unaware of the structure. The present discovery is, in fact, a particular case of the K\"ahler metrics found by V. Oproiu and N. Papaghiuc in \cite{OproiuPapag}.

In the case of non-negative sectional curvature, we must have radius $r_0=+\infty$ in order to have a complete metric. In the negative case, in order to have the structure well-defined, $r_0$ must be finite and then it follows the former cannot be complete.

We prove that the Hermitian structures defined on $\calT_{M,r_0}$ are naturally preserved by the lift of any isometry of the base manifold $M$. Indeed the group of isometries of $M$ lifts to the group of automorphisms of $\mg{\tm},J$, and thus the K\"ahler metric on the tangent manifold becomes an intrinsic object of space-form geometries and any of their smooth Riemannian quotients.

Using results from \cite{Alb2014d} we also show the K\"ahler metric is non-flat and Ricci-flat if and only if $m=2$ and $K\neq0$. In other words, we find a non-compact Calabi-Yau metric or holonomy $\SU(2)$ manifold.

Let us emphasise that, for $K>0$ and $\dim M=2$, our metric is shown to be the well-known Stenzel metric on $\calT_{S^2}$. This does not hold in higher dimensions. Indeed, the metric is Ricci-flat if and only if $m=2$ and the Stenzel or Eguchi-Hanson metric on $\calT_{S^m}$ is always Ricci-flat. Our construction has the virtue of working with a non-orientable base or with a hyperbolic space $H^2$, taking the respective adjustments. We do have now explicit $\SU(2)$-holonomy metrics on $\calT_{\R\Proj^2}$ and $\calT_{H^2,r_0}$.

We remark that, from its general construction, the new K\"ahler manifold has none of the \textit{expected} properties of a holomorphic bundle. Both zero-section and fibres are, simultaneously, $\R$-Lagrangian submanifolds. We obtain a rare example of an invariantly defined fibre bundle K\"ahler structure satisfying those two properties together.

In the last chapter of the article, we endeavour to discover the complex charts, or just a \textit{totally commuting complex frame field}, which would let us write for instance the Ricci-form. The former notion is introduced here, for the study of the cases $\dim M=2$ and curvature $\pm1$. We conclude this problem must have a more analytic rather than a geometric approach.

There is also a {notable} similarity with the Bryant-Salamon $\mathrm{G}_2$ metric on $\Lambda^2_-T^∗S^4$
since its weight functions and those in \eqref{theKahlermetric0} are very much alike.

We follow notation and the theory introduced in previous works, such as \cite{Alb2008,Alb2012,Alb2014d}.

\vspace{3mm}
\begin{center}
 \textbf{2. THE TANGENT MANIFOLD WITH SASAKI-TYPE HERMITIAN METRIC}
\nopagebreak
\end{center}
\setcounter{section}{2}
A smooth linear connection $\na$ on a smooth manifold $M$ opens the way to define new smooth structures on the total space $\tm$ of the tangent bundle $\pi:TM\lrr M$, arising from the structures on $M$. Until the end of this article we assume $\na$ is torsion-free.

The canonical charts induced from an atlas of $M$ and the corresponding trivialisations of $TM$ show that the tangent sub-bundle to the fibration $(TM,\pi,M)$ agrees with the kernel $V$ of $\dx\pi$. In particular, we have $V_u=T_u(T_xM)=\{u\}\times T_{x}M$ where $x=\pi(u)$. The identification of $V$ with $\pi^*TM$ thus follows from the very nature of the tangent bundle of $M$. Furthermore we have an exact sequence of vector bundles over $\tm$
\begin{equation}
 0\lrr V\lrr T\tm\stackrel{\dx\pi}\lrr\pi^*TM\lrr 0.
\end{equation}

The linear connection $\na$ gives a canonical decomposition of the tangent bundle of $\tm$ into $T\tm=H^\na\oplus V$, where $H^\na$ depends on the connection. Since the restriction ${\dx\pi_u}_|:H^\na_u\rr T_xM$ is an isomorphism, $\forall x\in M,\ u\in\inv{\pi}(x)$, both sub-bundles $H^\na$ and $V$ are isomorphic to the vector bundle $\pi^*TM$. Knowing this canonical decomposition, we conclude the linear connection $\pi^*\na\oplus\pi^*\na$, denoted $\na^*$, is well-defined as a linear connection on the manifold $\tm$. Clearly, the canonical projections are parallel morphisms.

A tautological vector field $\xi$ over $\tm$ is defined by $\xi_u=u$, through the \textit{vertical} lift. The projection $w\mapsto w^v,\ \, w\in T\tm$, (same notation for lift and projection should not be confusing) coincides with the map $\pi^*\na_\cdot\xi:T\tm\lrr V$. By construction, the horizontal distribution $H^\na$ agrees with the kernel of this map. The following expresses a fundamental identity of the theory. The torsion of $\na^*$ is
\begin{equation}\label{fundamentalidentity}
 {\na^{*}}_wz-{\na^{*}}_zw-[w,z]=\calri(w,z)\ ,\ \ \forall w,z\in T\tm ,
\end{equation}
where $\calri(w,z)=\pi^*R(w,z)\xi$ and $R$ is the curvature tensor of $(M,\na)$. Notice $\calri$ is indeed a tensor, depends only on the horizontal parts of $z,w$, and assumes only vertical values. Using projections and $\na^*_w\xi=w^v$, turns the long proof of \eqref{fundamentalidentity} with charts into a more pleasant verification.

Now suppose we have a Riemannian manifold $(M,g_{_M})$. We let $m=\dim_\R M$ and denote by $\na$ the Levi-Civita connection. We may both pull-back to horizontals and lift to verticals the given symmetric tensor. We distinguish the two, respectively, by $\pi^*g_{_M}$ and $\pi^\estrela g_{_M}$.

Any two positive real smooth functions $\mu,\lambda$ defined on $\tm$ induce a Riemannian metric $\mg{\tm}$ on $\tm$ depending on the canonical decomposition. It is defined by
\begin{equation}\label{themetric}
 \mg{\tm}=\mu^2\pi^*g_{_M}+\lambda^2\pi^\estrela g_{_M} .
\end{equation}
The well-known original metric $\mg{\tm}_0$, the case when $\mu=\lambda=1$, is the Sasaki metric.

A weighted Hermitian structure on the same manifold arises then from a $\mg{\tm}$-compatible almost-complex structure $J$. At each point $u\in\tm$ one defines an endomorphism $J$ of $T_u\tm$ by permuting lifts and by the correspondence, $\forall w\in T_{\pi(u)}M$,
\begin{equation}
w^h\longmapsto \frac{\mu}{\lambda}w^v\longmapsto -w^h .
\end{equation}
This clearly generalises the case $\mu=\lambda=1$, denoted $J_0$, which is again due to Sasaki (\cite{Sasa}). We call the endomorphism $B:T\tm\lrr T\tm$ defined by $Bw^h=w^v$, $Bw^v=0$ the \textit{mirror} map. Arising from previous studies, the mirror map proves to be quite useful since it is indeed a tensor and a $\na^{*}$-parallel one. We have 
\begin{equation}\label{thecomplexstructure}
J=\frac{\mu}{\lambda}B-\frac{\lambda}{\mu}B^\mathrm{ad} 
\end{equation}
where the adjoint is taken with respect to $\mg{\tm}_0$. Now, the associated symplectic 2-forms $\omega=J\lrcorner\mg{\tm}$ and $\omega_0=J_0\lrcorner\mg{\tm}_0$ satisfy (cf. \cite[Proposition 6.1]{Alb2012})
\begin{equation}\label{expressaodeomega}
  \omega=\mu\lambda\,\omega_0 .
\end{equation}
Indeed, $\omega(w^v,z^h)=\mg{\tm}(Jw^v,z^h)=-\frac{\lambda}{\mu}\mu^2\,\pi^*g_{_M}(w^h,z^h)=\lambda\mu\,\omega_0(w^v,z^h)$, $\forall w,z\in TM$, and for other lifts of $w,z$ the computation is quite similar.

The Hermitian structure $(J_0,\mg{\tm}_0)$ plays an important role. Until the end of the article the musical isomorphism $\cdot^\flat:T\tm\lrr T^*\tm$ shall refer to the Sasaki metric $\mg{\tm}_0$. And we assume always $m=\dim_\R M=\dim_\C\tm>1$.
\begin{teo}\label{TeoremaTMsymplectic}
 The non-degenerate 2-form $\omega$ is closed if and only if $\mu\lambda$ is a constant.
\end{teo}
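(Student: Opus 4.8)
The plan is to reduce the statement to identity \eqref{expressaodeomega}, namely $\omega=\mu\lambda\,\omega_0$, so that the question becomes one about the fixed Sasaki symplectic form $\omega_0$ and the single scalar function $f:=\mu\lambda$. (Non-degeneracy of $\omega$ is already clear from \eqref{expressaodeomega}, since $\mu\lambda>0$ and $\omega_0$ is non-degenerate; the content of the theorem is the closedness.) First I would record the independent fact that $\omega_0$ is closed. The quickest justification is the one already indicated in the Introduction: under the fibrewise isomorphism $\tm\lrr T^*M$ induced by $g_{_M}$, the form $\omega_0$ is the pull-back of the canonical Liouville symplectic form on $T^*M$, which is exact, hence $\dx\omega_0=0$. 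A self-contained ``second proof'', in the style announced in the Introduction, computes $\dx\omega_0(X,Y,Z)$ directly on horizontal and vertical lifts of a local frame on $M$: applying the Koszul formula for $\dx$ of a $2$-form, replacing every Lie bracket by means of the fundamental identity \eqref{fundamentalidentity}, and using that $\omega_0$ is $\na^*$-parallel — it is assembled from the $\na^*$-parallel tensors $B$, $B^{\mathrm{ad}}$ and $\mg{\tm}_0$ — all the connection terms cancel by the antisymmetry of $\omega_0$, and one is left with the cyclic sum of the curvature terms $\omega_0(\calri(X,Y),Z)=-\pi^*g_{_M}(R(x,y)\xi,z)$, which vanishes by the pair symmetry of $R$ together with the first (algebraic) Bianchi identity.

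Granting $\dx\omega_0=0$, the rest is immediate. Writing $f=\mu\lambda$,
\[
\dx\omega=\dx(f\,\omega_0)=\dx f\wedge\omega_0+f\,\dx\omega_0=\dx f\wedge\omega_0 ,
\]
so $\omega$ is closed precisely when $\dx f\wedge\omega_0\equiv 0$. The ``if'' direction is then trivial: if $\mu\lambda$ is constant then $\dx f=0$, whence $\dx\omega=0$.

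For the converse I would invoke the Lefschetz-type pointwise fact that, on a symplectic vector space of real dimension $2m$ with $m\geq 2$, the map $\alpha\mapsto\alpha\wedge\omega_0$ from $1$-forms to $3$-forms is \emph{injective}. This is elementary: in a symplectic basis with $\omega_0=\sum_i e^i\wedge f^i$, if $\alpha=\sum_i(a_ie^i+b_if^i)$ then for any fixed index $j$ the product $\alpha\wedge\omega_0$ contains the monomial $a_j\,e^j\wedge e^i\wedge f^i$ (resp.\ $b_j\,f^j\wedge e^i\wedge f^i$) for some $i\neq j$, which is possible exactly because $m\geq 2$, and such a monomial cannot be cancelled by any other term; hence $\alpha\wedge\omega_0=0$ forces $\alpha=0$. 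Applying this with $\alpha=\dx f$ at each point of $\tm$, where $\dim_\R\tm=2m$ and $m>1$ is the standing hypothesis, we get $\dx f=0$; thus $\mu\lambda$ is locally constant, hence constant on each connected component. This establishes the equivalence.

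The part I expect to require the actual care, if one insists on the fully intrinsic argument, is the verification $\dx\omega_0=0$: one must keep track that horizontal lifts of a frame on $M$ do not commute, handle the vertical correction $\calri$ coming from \eqref{fundamentalidentity}, and check that $\na^*$-parallelism of $\omega_0$ really does kill every non-curvature contribution before appealing to Bianchi. Citing instead the pull-back-of-Liouville-form description collapses the whole proof to the displayed two lines plus the linear-algebra lemma above, and makes transparent that the hypothesis $m>1$ is essential — for $m=1$ the form $\omega_0$ is an area form, $\dx f\wedge\omega_0\equiv 0$ identically, and the assertion would be false.
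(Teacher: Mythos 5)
Your proposal is correct and follows essentially the same route as the paper: the paper also establishes that $\omega_0$ is exact (via $\omega_0=-\dx\theta$ with $\theta(w)=\xi^\flat(Bw)$, which it then identifies with the pull-back of the Liouville form, exactly your first justification), and then concludes from $\dx\omega=\dx(\mu\lambda)\wedge\omega_0$ together with the injectivity of wedging a $1$-form with a symplectic form. Your only additions are to spell out that injectivity lemma in a symplectic basis and to note explicitly that it is here that the standing hypothesis $m>1$ is used — a worthwhile clarification, but not a different argument.
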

\begin{proof}
Consider the 1-form on $\tm$ defined by $\theta(w)=\xi^\flat(Bw)$. Correcting $\na^*$ to a torsion-free connection $D^*=\na^*-\frac{1}{2}\calri$ and recalling $\calri$ takes values in $V$, we find
\begin{align*}
\dx\theta(w,z)&= (D^*_w\theta)z-(D^*_z\theta)w \\
&=  w(\theta(z))-\theta(\na^*_wz)-z(\theta(w))+\theta(\na^*_zw) \\
&= \mg{\tm}_0(w^v,Bz)-\mg{\tm}_0(z^v,Bw) \\
&= -\omega_0(w,z)
\end{align*}
Now clearly $\dx\omega=\dx(\mu\lambda)\wedge\omega_0$. And since the wedge of 1-forms with a symplectic form is injective, the result follows.
\end{proof}
Here follows a heuristic confirmation of the above. Notice the map $\cdot^\flat:\tm\rr{\cal T}^*_M,\ v\mapsto v^\flat$, is a \emph{diffeomorphism}, invariant for the respective induced connections $\na^*$ arising from the Levi-Civita connection. Simply, because on horizontals it is the identity and on verticals the canonical map $\cdot^\flat:TM\rr T^*M$ permutes the respective connections. Then one checks that $\theta$ is the pull-back by $\cdot^\flat$ of the canonical Liouville 1-form $\ell$ on the cotangent manifold. Moreover, as it is well-known, $-\dx\ell$ is an exact symplectic form on ${\cal T}^*_M$, so the same happens with the pull-back $\omega_0=-\dx\theta$.

In this article we assume the smooth functions $\mu,\lambda$ are dependent only of
\begin{equation}
 r=r(u)=\|u\|_{_M},\ \  u\in\tm,
\end{equation}
and are con\-ti\-nu\-ous\-ly differentiable at $0$. Since $r^2=\pi^\estrela g_{_M}(\xi,\xi)$, we easily apply the $\mg{\tm}_0$ metric connection $\na^{*}$ in order to find
\begin{equation}\label{derivativeofr}
 \dx r^2=2\xi^\flat .
\end{equation}

Let us now assume we have an isometry $f:M\lrr M$.
\begin{prop}\label{isometries}
 Under the above conditions, the differential $\dx f:TM\lrr f^*TM$ is a vector-bundle isomorphism, which corresponds to a well-defined iso\-me\-try $f_\star:\tm\lrr\tm$ and this manifold isometry is pseudo-holomorphic. In other words, the Hermitian structure $\mg{\tm},J$ is invariant under isometries $f$ of $M$.
\end{prop}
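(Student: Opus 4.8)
My plan is the following. Given an isometry $f:M\lrr M$, the differential $\dx f:TM\lrr f^*TM$ is fibrewise a linear isomorphism, and since $f$ is itself a diffeomorphism the induced map
\[ f_\star:\tm\lrr\tm,\qquad f_\star(u)=\dx f_{\pi(u)}(u), \]
is a diffeomorphism of the total space covering $f$. The one structural input I would rely on is that an isometry is affine for the Levi-Civita connection, i.e.\ $f^*\na=\na$ (and, in addition, preserves the curvature $R$). By the naturality in $(M,\na)$ of the constructions of Section~2 this forces $f_\star^*\na^*=\na^*$, and hence that $f_\star$ preserves the tautological field $\xi$ (being fibrewise linear over $f$), the vertical bundle $V=\ker\dx\pi$, the horizontal bundle $H^\na=\ker(\pi^*\na_\cdot\xi)$, and the tensor $\calri(w,z)=\pi^*R(w,z)\xi$. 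Finally, each $\dx f_x$ being a linear isometry of $(T_xM,g_{_M})$, we have $r\circ f_\star=r$, whence the radial functions satisfy $\mu\circ f_\star=\mu$ and $\lambda\circ f_\star=\lambda$.

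The key step is to check that $\dx f_\star$ commutes with the canonical lifts:
\[ \dx f_\star(w^v)=(\dx f\,w)^v,\qquad \dx f_\star(w^h)=(\dx f\,w)^h,\qquad\forall\,w\in TM. \]
The vertical identity is immediate, since $f_\star$ is a morphism of vector bundles and $w^v$ is simply $w$ read inside the fibre. For the horizontal one, recall that $w^h_u$ is the unique vector of $H^\na_u$ with $\dx\pi(w^h_u)=w$; as $f_\star$ preserves $H^\na$ and $\dx\pi\circ\dx f_\star=\dx f\circ\dx\pi$, the vector $\dx f_\star(w^h)$ lies in $H^\na$ and projects onto $\dx f\,w$, hence equals $(\dx f\,w)^h$. (Equivalently: $f_\star$ sends $\na$-horizontal curves in $\tm$ to $\na$-horizontal curves, because $f$ is affine.)

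Everything else is bookkeeping. From $f^*g_{_M}=g_{_M}$ and the two lift identities we get $f_\star^*(\pi^*g_{_M})=\pi^*g_{_M}$ and $f_\star^*(\pi^\star g_{_M})=\pi^\star g_{_M}$; together with $\mu\circ f_\star=\mu$, $\lambda\circ f_\star=\lambda$ and \eqref{themetric} this yields $f_\star^*\mg{\tm}=\mg{\tm}$, so $f_\star$ is an isometry. For the almost-complex structure it suffices to test the defining rule $w^h\longmapsto\frac{\mu}{\lambda}w^v\longmapsto-w^h$: applying $\dx f_\star$, the two lift identities, and $\frac{\mu}{\lambda}\circ f_\star=\frac{\mu}{\lambda}$, we obtain $\dx f_\star\circ J=J\circ\dx f_\star$ on horizontals, and likewise on verticals; alternatively one may invoke \eqref{thecomplexstructure}, noting that the lift identities make $f_\star$ preserve the mirror map $B$ and (the $\mu=\lambda=1$ case just proved) the metric $\mg{\tm}_0$, hence $B^{\mathrm{ad}}$. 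Since $\omega=J\lrcorner\mg{\tm}$, the symplectic form is preserved as well, so $(\mg{\tm},J)$ is $f_\star$-invariant, which is the assertion; in particular the whole structure descends to any smooth Riemannian quotient $M/\Gamma$ by a discrete group of isometries.

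The only point carrying genuine content is the intertwining of horizontal lifts, a standard consequence of the affineness of $f$; the remainder is the naturality of the Sasaki-type construction. In the alternative chart-based argument (the ``proved twice'' style) one would instead substitute the coordinate expression of $f$ into the transition rules for the horizontal and vertical frames, using that affineness constrains the transformed Christoffel symbols, and verify the same identities component-wise --- more computational, but not more difficult.
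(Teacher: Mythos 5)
Your argument is correct and follows essentially the same route as the paper: use the fact that an isometry is affine for the Levi-Civita connection to see that $f_\star$ preserves $H^\na$ and intertwines the horizontal and vertical lifts with $\dx f$, hence preserves $\mg{\tm}_0$, the weights $\mu,\lambda$ (as functions of $r$ alone), and therefore $\mg{\tm}$ and $J$. You merely spell out more of the bookkeeping (the lift identities, preservation of $\xi$ and $B$) that the paper leaves implicit.
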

\begin{proof}
 We define $f_\star(u)=\dx f_{\pi(u)}(u)\in T_{f(\pi(u))}M$ and hence the differential of $f_\star$ coincides with the map $\dx f$ on vertical directions. Due to uniqueness, the Levi-Civita connection is invariant by the diffeomorphism $f$ and so $H^\na$ is preserved by $\dx f$. Hence $\dx f_\star$, up to conjugation by $\dx\pi_{|H^\na}$, also coincides with $\dx f$ on horizontal directions. Therefore $\mg{\tm}_0$ is preserved by $f_\star$. Moreover, the weighted metric $\mg{\tm}$ with functions $\lambda,\mu$, functions of $r$ only, is preserved by $f_\star$. Finally, since the horizontal and the vertical distributions are both preserved, it becomes easy to see that 
 \[ J_{f_\star(u)}\circ(\dx f_\star)_u=(\dx f_\star)_u\circ J_u,\ \ \forall u\in\tm.  \]
 In other words, $f_\star$ is pseudo-holomorphic.
\end{proof}
Many functorial properties from the action of isometry groups on $M$ thus carry over to the Hermitian metric on $\tm$.

\vspace{3mm}
\begin{center}
 \textbf{3. INTEGRABILITY OF $J$}
\end{center}
\setcounter{section}{3}\label{sectionIntegrabi}

For each $r_0>0$ let us denote by $\calT_{M,r_0}\subset\tm$ the open disk (bundle) of radius ${r_0}$:
 \begin{equation}
  \calT_{M,r_0}=\{u\in \tm:\ \|u\|<r_0\} .
 \end{equation}
We may allow the functions $\mu,\lambda$ to be only defined on a same interval of $\R^+_0$. The following Theorem is independent but concerns with a particular case in \cite{OproiuPapag}. We refer the reader also to this reference for the extensive literature on the geometry of tangent bundles.
\begin{teo}\label{IntegrabilidadedeSasakigenemdiscobundle}
 The largest disk bundle $\calT_{M,r_0}$ where the almost-complex structure $J$ is defined and integrable is obtained with, and only with, the following data:
 \begin{itemize}                                                                                                                                                                                                                                                                                                                                                                      \item $M$ has constant sectional curvature $K$
 \item $\frac{\mu}{\lambda}(r)=\sqrt{c_1+Kr^2}$, \,$\forall r\in[0,r_0[$, where  $c_1>0$ is any constant
 \item $r_0=\sqrt{-\frac{c_1}{K}}$ for $K<0$ and $r_0=+\infty$ for $K\geq0$.
 \end{itemize}
\end{teo}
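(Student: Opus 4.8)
The plan is to compute the Nijenhuis tensor $N_J$ of the almost-complex structure $J$ given by \eqref{thecomplexstructure}, using the torsion-free companion connection $\na^*$ and the fundamental identity \eqref{fundamentalidentity}. Since $N_J(w,z)$ can be written purely in terms of $\na^*$ applied to $J$ and the tensor $\calri$, and since $J$, $B$, $B^\mathrm{ad}$ are expressed through the single function $\rho:=\mu/\lambda$ (a function of $r$ only), the integrability condition $N_J\equiv 0$ will reduce to a system of scalar identities. First I would fix an arbitrary point $u\in\tm$ and horizontal lifts $w^h,z^h$ and vertical lifts $w^v,z^v$ of vectors $w,z\in T_{\pi(u)}M$, and expand $N_J$ on the four combinations $(w^h,z^h),(w^h,z^v),(w^v,z^h),(w^v,z^v)$. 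Using $\na^*_w\xi=w^v$, \eqref{derivativeofr}, and the parallelism of $B$, $B^\mathrm{ad}$ and of the projections, the derivatives of $\rho$ along horizontal directions vanish while along vertical directions they produce terms in $\rho'(r)$ and $\langle\xi,\cdot\rangle$; the only genuinely geometric input is $\calri(w,z)=\pi^*R(w,z)\xi$, which lives in $V$ and survives only in brackets of two horizontals.

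The heart of the argument, I expect, is the $(w^h,z^h)$ component: here $N_J$ will contain a term proportional to $\calri(w,z)=\pi^*R(w,z)\xi$ and a term built from $\rho$, $\rho'$ and the metric pairings $g_{_M}(w,\xi)$, $g_{_M}(z,\xi)$, $g_{_M}(w,z)$. Requiring this to vanish for all $w,z$ forces, on one hand, $\pi^*R(w,z)\xi$ to be expressible as a combination of $g_{_M}(w,\xi)z^v-g_{_M}(z,\xi)w^v$ and (after applying $J$ or $B$) $g_{_M}(w,z)\xi^v$ — which is exactly the algebraic form of the curvature tensor of a constant-sectional-curvature manifold, $R(w,z)\xi=K\bigl(g_{_M}(z,\xi)w-g_{_M}(w,\xi)z\bigr)$; and on the other hand it fixes the coefficient relation tying $\rho$ to that same constant $K$. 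Pinning down that $M$ must actually have \emph{constant} $K$ (not merely pointwise-constant, and globally the same constant) will use the second Bianchi identity / Schur's lemma in the standard way, together with connectedness implicit in ``manifold''; I would remark that the non-degeneracy of the pairing and the freedom to choose $w,z,\xi$ at a point is what rigidifies $R$.

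Once constant curvature is in force, the remaining scalar equation from the $(w^h,z^h)$ piece becomes an ODE for $\rho$ in the variable $r$. Writing $\rho=\rho(r)$ and using $\dx(r^2)=2\xi^\flat$ to convert vertical derivatives of $\rho$ into $\rho'(r)/r$ times $\xi^\flat$-pairings, the equation will read (up to normalisation) $\rho\rho' = Kr$, equivalently $(\rho^2)' = 2Kr$, whose general solution is $\rho^2 = c_1 + Kr^2$ with $c_1$ an integration constant; positivity of $\mu,\lambda$ and $C^1$-regularity at $0$ (so $\rho(0)^2=c_1$) force $c_1>0$. The maximal interval on which $\rho$ is real and positive is then $[0,+\infty)$ when $K\ge 0$ and $[0,\sqrt{-c_1/K})$ when $K<0$, giving precisely $r_0=\sqrt{-c_1/K}$ for $K<0$ and $r_0=+\infty$ for $K\ge 0$. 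I would close by checking the other three components of $N_J$ give no further constraint once this data is imposed — the $(w^v,z^v)$ and mixed components vanish identically because $\calri$ contributes nothing there and the $\rho$-terms cancel by the chosen relation — so the listed conditions are not only necessary but sufficient. The main obstacle is organising the $(w^h,z^h)$ computation cleanly enough to separate the curvature-algebraic constraint from the scalar ODE without drowning in index bookkeeping; the connection-theoretic formalism of \eqref{fundamentalidentity} together with the mirror map $B$ is exactly what keeps this manageable.
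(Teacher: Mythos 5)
Your proposal follows essentially the same route as the paper: expand the Nijenhuis tensor using the torsion-free connection $\na^*-\frac{1}{2}\calri$, isolate the horizontal--horizontal component where $\calri(w,z)=\pi^*R(w,z)\xi$ must cancel against the terms in $\rho,\rho'$, read off the constant-curvature form of $R$ together with the ODE $(\rho^2)'=2Kr$, and integrate to get $\rho^2=c_1+Kr^2$ with the stated maximal domain. Two small remarks. First, the paper short-circuits your check of the other three components by observing that $N$ is complex anti-linear, so vanishing on $H^\na$ alone suffices; your plan of verifying them directly also works but is longer. Second, your appeal to the second Bianchi identity / Schur's lemma to globalise $K$ is the one step that would fail as stated: Schur's lemma requires $\dim M\geq 3$, while the theorem (and the most interesting case in this paper) includes $m=2$. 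The correct mechanism — which you in fact already have in hand — is that $2\rho\rho'$ is a single function of $r$ defined on all of $\tm$, and the identity $R(w,z)u=2\rho\rho'\bigl(\langle z,u\rangle w-\langle w,u\rangle z\bigr)$, with left side linear in $u$, forces $2\rho\rho'$ to equal one constant $K$ that is then the same over every base point; no Bianchi argument is needed, and this is exactly why the paper notes the conclusion holds ``even in dimension 2''.
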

\begin{proof}
Let us write $J=aB-\inv{a}B^{\mathrm{ad}}$ where $a=\frac{\mu}{\lambda}$. Then a first proof of the analytic integrability is to take, in the notation of \cite[Theorem 3]{OproiuPapag}, the values $a_1=\inv{a_2}=a$, $a_3=a_4=b_1=\cdots=b_4=0$ and apply that result.

But here is a second proof obtained easily with our experimented techniques.
By the Theorem of Newlander-Nirenberg, the integrability of $J$ is equivalent to the vanishing of the Nijenhuis tensor $N(w,z)=J[w,Jz]+J[Jw,z]+[w,z]-[Jw,Jz]$ for all tangent vectors at any given point of $\tm$. Since the tensor $N$ is complex anti-linear, it follows that it is enough to see $N(w,z)=0$ with $w,z\in{H}^\na$. Moreover, by tensoriality, we may just take lifts of vector fields of $M$. We have the formula $\dx a=a'\dx(r^2)=2a'\xi^\flat$ where $a'$ is the derivative with respect to $r^2$. Hence $\dx a(w)=0$ and $\dx a(Bw)=2a'\xi^\flat(Bw)$, $\forall w\in H^\na$. Now, using the torsion-free connection $\na^*-\frac{1}{2}\calri$, we find
 \begin{eqnarray*}
   N(w,z)&=& J[w,aBz]+J[aBw,z]+[w,z]-[aBw,aBz]  \\
   &=& J\bigl(\na^*_w(aBz)-\na^*_{aBz}w\bigr)         +J\bigl(\na^*_{aBw}z-\na^*_z(aBw)\bigr)+ \\
   & &\ \ \na^*_wz-\na^*_zw-\calri(w,z)-\na^*_{aBw}(aBz)+\na^*_{aBz}(aBw) \\
   &=& -\inv{a}\dx a(w)z-\na^*_wz-a^2\na^*_{Bz}Bw+a^2\na^*_{Bw}Bz+\inv{a}\dx a(z)w+\na^*_zw +\na^*_wz\\
   & & \ \ -\na^*_zw-\calri(w,z)-a\dx a(Bw)Bz-a^2\na^*_{Bw}Bz+a\dx a(Bz)Bw+a^2\na^*_{Bz}Bw  \\
   &=& -\pi^*R(w,z)\xi+2aa'\bigl(\xi^\flat(Bz)Bw-\xi^\flat(Bw)Bz\bigr) .
 \end{eqnarray*}
At the base level, integrability is thus equivalent to $R(w,z)u=2aa'(\langle z,u\rangle_{_M}w-\langle w,u\rangle_{_M}z)$. Since $a$ is a function only of $r$, even in dimension 2 we must have constant sectional curvature, say $K$. Integrating, we find $a^2=c_1+Kr^2$. Now the boundary conditions follow.
\end{proof}
Notice the flat base case ($K=0$) is not as trivial as that easily deduced for $J_0$. Indeed, the condition $\mu/\lambda$ constant may give holonomy equal to $\sol(2m)$, cf. \cite[Proposition 2.1.i]{Alb2014d}. However, if $\mu,\lambda$ are constants, then we do have a flat manifold $\tm$, by  \cite[Proposition 2.1.iii]{Alb2014d}, which is K\"ahler, by Theorem \ref{TeoremaTMsymplectic}.

Suppose $M$ has constant sectional curvature $K$. Combining Theorems  \ref{TeoremaTMsymplectic},\ref{IntegrabilidadedeSasakigenemdiscobundle}, we obtain the invariant K\"ahler metric on $\calT_{M,r_0}$ by solving $\mu/\lambda=\sqrt{c_1+Kr^2}$, $\mu\lambda=c_2>0$ constant. This yields
\begin{equation}
 \mu=\frac{c_2}{\lambda}=\sqrt{c_2}\sqrt[4]{c_1+Kr^2}.
\end{equation}
Up to a global constant conformal factor the desired metric is
\begin{equation}\label{theKahlermetric}
 \mg{\calT_{M,r_0}}=\sqrt{c_1+Kr^2}\,\pi^*g_{_M}+\frac{1}{\sqrt{c_1+Kr^2}}\,\pi^\estrela g_{_{M}}  .
\end{equation}
\begin{teo}
 The K\"ahler metric $\mg{\calT_{M,r_0}}$ is complete if $K\geq0$ and non-complete if $K<0$.
\end{teo}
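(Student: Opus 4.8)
The plan is to reduce everything to the behaviour of the metric in the fibre (radial) direction, since the base direction is automatically controlled by the fact that a real space-form, and any of its smooth Riemannian quotients, is a complete Riemannian manifold. Writing $d$ for the distance induced by $\mg{\calT_{M,r_0}}$, I would first record two Lipschitz-type estimates. Because the horizontal and vertical distributions are orthogonal and $\mu^2=\sqrt{c_1+Kr^2}$, one has $\|w\|_{\mg{\tm}}\geq\mu\,\|\dx\pi(w)\|_{g_{_M}}$ for every $w\in T\calT_M$; when $K\geq0$ this gives $\|w\|_{\mg{\tm}}\geq c_1^{1/4}\|\dx\pi(w)\|_{g_{_M}}$ and hence, integrating along curves, $d_{_M}(\pi(u),\pi(u'))\leq c_1^{-1/4}d(u,u')$. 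Next, from $\dx r^2=2\xi^\flat$ together with $\pi^\star g_{_M}(\xi,\xi)=r^2$ a direct dual-norm computation gives $\|\dx r\|_{\mg{\tm}}=(c_1+Kr^2)^{1/4}$; consequently the function
\[ \rho(u)=\int_0^{r(u)}\frac{\dx t}{(c_1+Kt^2)^{1/4}} \]
has unit gradient off the zero-section, so that $|\rho(u)-\rho(u')|\leq d(u,u')$.

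For $K<0$ I would produce a Cauchy sequence with no limit. Fix $x\in M$ and a unit vector $e\in T_xM$, and consider the fibre curve $\gamma(t)=te\in T_xM$ for $t\in[0,r_0[$, whose velocity at $\gamma(t)$ is the vertical lift $e^v$, so $\|\dot\gamma(t)\|_{\mg{\tm}}=(c_1+Kt^2)^{-1/4}$. Near the endpoint $r_0=\sqrt{-c_1/K}$ we have $c_1+Kt^2=|K|(r_0-t)(r_0+t)\geq|K|r_0(r_0-t)$, so the integrand is $O\bigl((r_0-t)^{-1/4}\bigr)$ and $\gamma$ has finite length $\int_0^{r_0}(c_1+Kt^2)^{-1/4}\,\dx t<\infty$. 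Hence, for any $t_n\uparrow r_0$, the points $\gamma(t_n)$ are $d$-Cauchy, while $\gamma(t_n)\to r_0e$ in $\calT_M$ with $r_0e\notin\calT_{M,r_0}$; since $d$ induces the manifold topology, this sequence has no limit in $\calT_{M,r_0}$, so the metric is not complete.

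For $K\geq0$ one has $r_0=+\infty$ and $\int_0^{+\infty}(c_1+Kt^2)^{-1/4}\,\dx t=+\infty$, so $\rho$, regarded as a function of $r$, is an increasing homeomorphism of $[0,+\infty[$ onto itself. Let $(u_n)$ be $d$-Cauchy. The first estimate makes $(\pi(u_n))$ a Cauchy sequence in the complete manifold $(M,g_{_M})$, so it converges, say to $x_0$, and lies eventually in a compact metric ball about $x_0$. The second estimate makes $(\rho(u_n))$ Cauchy in $\R$, hence bounded, so $r(u_n)\leq R$ for some $R<\infty$ and all $n$. Thus $(u_n)$ eventually lies in a closed disk bundle of radius $R$ over a compact base, which is compact; a Cauchy sequence in a compact set converges, so $(u_n)$ converges in $\calT_M$, and $\mg{\calT_{M,r_0}}$ is complete.

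The only point requiring some care is the choice of the radial function $\rho$ and the two facts it carries: that it is $1$-Lipschitz for $d$, and that the radial integral diverges when $K\geq0$ but converges when $K<0$ — this dichotomy being exactly the mechanism by which completeness is gained or lost. The base direction, by contrast, is disposed of in one line by completeness of the space-form (and, by Hopf–Rinow, the compactness of its closed balls).
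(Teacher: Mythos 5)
Your argument is correct, and it is both more self-contained and more complete than the paper's. The paper's proof rests on the fact (quoted from an earlier work of the author) that the fibres are totally geodesic: the fibre ray from $0$ to the boundary is then a geodesic whose length $\int_0^{r_0}(c_1+Kt^2)^{-1/4}\,\dx t$ is infinite for $K\geq0$ and finite for $K<0$, and the paper concludes by asserting that geodesics extend indefinitely in the first case but not in the second. You bypass the totally geodesic property entirely: for $K<0$ you only need the fibre ray to be \emph{some} finite-length curve reaching the boundary, which already yields a non-convergent $d$-Cauchy sequence; and for $K\geq0$ you supply the step the paper leaves implicit, namely why divergence of the radial integral actually forces completeness. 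Your two Lipschitz estimates --- $d_{_M}(\pi(u),\pi(u'))\leq c_1^{-1/4}d(u,u')$ from $\mu^2=\sqrt{c_1+Kr^2}\geq\sqrt{c_1}$, and $|\rho(u)-\rho(u')|\leq d(u,u')$ from $\|\dx r\|_{\mg{\tm}}=1/\lambda=(c_1+Kr^2)^{1/4}$ --- trap any Cauchy sequence in a compact disk bundle over a compact ball of the complete base, which is exactly the Hopf--Rinow-style mechanism the paper's one-line conclusion presupposes. The only cosmetic caveats are that your formula for $\|\dx r\|_{\mg{\tm}}$ uses the normalisation $\mu\lambda=1$ of \eqref{theKahlermetric} (harmless), and that $\rho$ fails to be smooth on the zero-section, where the $1$-Lipschitz bound still holds by continuity. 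In short: same radial-length dichotomy as the engine, but your packaging proves strictly more than the paper writes down.
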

\begin{proof}
 It follows by straightforward computations that the fibres are totally geodesic, cf. \cite[Proposition 1.3]{Alb2014d}. Suppose $\gamma$ denotes a fibre-ray which issues from 0 to the boundary of $\tm$. The length of the linear curve $\gamma$ is $l(\gamma)=\int_0^{{r_0}}\frac{1}{\sqrt[4]{c_1+Kt^2}}\dx t$. Then $l(\gamma)=+\infty$ in the case $K\geq0,\ r_0=+\infty$, and is finite in the negative curvature case because the integral converges on $[0,r_0[$ where $r_0=\sqrt{-\frac{c_1}{K}}$. Finally, any geodesic may be indefinitely extended in the first case, whereas in the second some geodesics cannot.
\end{proof}

Improving on the above results, we may also consider the case of the complementary space $\tm\backslash\overline{\calT_{M,r_0}}$ when $K>0$, $c_1\leq0$ and $r_0=\sqrt{-\frac{c_1}{K}}$. However, we find no complete metric here as well.

Since all structures $\mg{\tm}$, $J$ and $\omega$ are invariant under the lift of isometries of $M$, by Proposition \ref{isometries}, our K\"ahler metrics may also be found on smooth quotients of a same invariant construction over the tangent manifold of any given real space-form.

Finally, the metric \eqref{theKahlermetric} has a remarkable resemblance with the well-known $\mathrm{G}_2$-holonomy metric of Bryant-Salamon on the manifolds $\Lambda^2_\pm T^*M^4$. The weight functions are formally the same, with constant scalar curvature replacing $K$, and the completeness issues are analogous (cf. \cite{Alb2014d} and the references therein).

\vspace{3mm}
\begin{center}
\textbf{4. FURTHER GEOMETRIC PROPERTIES}
\end{center}
\setcounter{section}{4}

A first problem with the K\"ahler metric found above is to compute the volume of $\calT_{M,r_0}$ for any $K,r_0$. This becomes quite easy by simply recalling from \eqref{expressaodeomega} that $\omega=\omega_0$. Then we may apply the co-area formulae \cite[p. 125, 160]{Chavel} to deduce:
\begin{equation}
\vol(\calT_{M,r_0})=\vol_{g_{_M}}(M)\vol_{_{\R^m}}(D_{r_0}(0))=\vol_{g_{_M}}(M)\frac{\pi^{m/2}r_0^m}{\Gamma(m/2+1)}.
\end{equation}

We may normalise the metric \eqref{theKahlermetric} within the constant values of $K,c_1$; indeed these correspond either to a conformal factor of $g_{_M}$ on $M$ and/or to conformal factors of the metric in $H^\na$ and $V$. Yet, they are helpful in computations.

The restriction of $\pi$ to $\tm$ is not a Riemannian submersion in general. Some curvature related results for $\mg{\tm}$ may be easily obtained from the theory in \cite{Alb2014d}, such as the Levi-Civita connection, the curvature on the zero-section and on the fibres. The fibres are totally geodesic Riemannian submanifolds. From \cite[Section 1.5]{Alb2014d} we find that the $m$-plane disk $D_{r_0}(0)\subset\R^m$ with spherically symmetric metric $\lambda^2\sum_{i=1}^m(\dx y^i)^2$ has scalar curvature
\begin{equation}
 \mathrm{Scal}=
 \frac{(m-1)K}{4(c_1+Kr^2)^\frac{3}{2}}(3(m-2)Kr^2+4mc_1) .
\end{equation}
In case $m=2$, this is $2Kc_1/(c_1+Kr^2)^\frac{3}{2}$. And therefore one sees that, in the boundary, when $r\nearrow r_0$, the scalar curvature is $0$, if $K>0$, and $-\infty$, if $K<0$. Also notice the sectional curvature of the zero-section $M\hookrightarrow\tm$, also totally geodesic, is clearly $K/\sqrt{c_1}$. But this is precisely the sectional curvature of the fibre $D_{r_0}$ at 0, so we may wonder of an Einstein metric on $\tm$ (we recall, for Einstein metrics on 4-manifolds, orthogonal planes have the same sectional curvature).

In searching for Einstein metrics on the whole space, there is a test in \cite[Corollary 2.1]{Alb2014d} which one may perform when the base manifold is itself Einstein. Again, like the $\mathrm{G}_2$ metrics of Bryant-Salamon, our K\"ahler manifolds satisfy that necessary condition, from the Ricci tensor, and indeed they may well be Einstein, with Einstein constant 
\begin{equation}
 (m-2)K/2\sqrt{c_1}.
\end{equation}
It is easy to apply the referred test. Certainly, when $m\neq2$, the spaces are not Ricci flat and thus their holonomy is ${\mathrm{U}}(m)$. Let us give an explicit proof of this result.
\begin{teo}
 In case $K=0$, the K\"ahler metric $\mg{\calT_{M,r_0}}$ on ${\calT_{M,r_0}}$ is flat.
 
 For the case $K\neq0$, the metric satisfies:
 \begin{enumerate}
  \item[(i)] if $m\neq2$, the holonomy is ${\mathrm{U}}(m)$.
  \item[(ii)] if $m=2$, the holonomy is $\SU(2)$; in other words, the metric is of the Calabi-Yau type, this is, non-flat K\"ahler and Ricci-flat.
 \end{enumerate}
\end{teo}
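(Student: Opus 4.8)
The plan is to read the holonomy off the curvature, the latter obtained by specialising the formulae of \cite{Alb2014d} to the present data (weights $\mu/\lambda=\sqrt{c_1+Kr^2}$ and $\mu\lambda$ constant, base of constant sectional curvature $K$), exploiting that $\mg{\calT_{M,r_0}}$ is already K\"ahler by Theorems \ref{TeoremaTMsymplectic} and \ref{IntegrabilidadedeSasakigenemdiscobundle}. The case $K=0$ needs nothing new: by \eqref{theKahlermetric} the weights are then the constants $\mu=\sqrt{c_2}\sqrt[4]{c_1}$, $\lambda=\sqrt{c_2}/\sqrt[4]{c_1}$ and $M$ is flat, so $\tm=\calT_{M,r_0}$ is flat by \cite[Proposition 2.1.iii]{Alb2014d}. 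Assume henceforth $K\neq0$. Being K\"ahler, the metric has holonomy inside $\mathrm{U}(m)$, and two assertions remain: that it is not flat, and what its Ricci tensor is. Non-flatness is cheap: the fibre through the origin is totally geodesic and its induced scalar curvature, displayed just before the theorem, equals $m(m-1)K/\sqrt{c_1}\neq0$ at $0$, while a totally geodesic submanifold of a flat manifold is flat.

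For the Ricci tensor I would feed the data above into the Levi-Civita and curvature formulae of \cite{Alb2014d}, computing the Ricci form on horizontal-horizontal, vertical-vertical and mixed pairs in turn (the K\"ahler identity $\mathrm{Ric}(Jw,Jz)=\mathrm{Ric}(w,z)$ roughly halves this), with the help of $\na^*$, the mirror $B$ and $\dx r^2=2\xi^\flat$; the Einstein test of \cite[Corollary 2.1]{Alb2014d} predicts the outcome and checks it. One finds that $\mg{\calT_{M,r_0}}$ is Einstein with constant $(m-2)K/2\sqrt{c_1}$, so that it is Ricci-flat precisely when $m=2$ and has nowhere-zero Ricci tensor for $m\neq2$. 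This is the bulk of the work, but it is routine once the formulae of \cite{Alb2014d} are in hand; alternatively one could run the computation through \cite[Theorem 3]{OproiuPapag}.

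Suppose $m\neq2$. Then the metric is K\"ahler and Einstein with non-zero constant, hence not Ricci-flat; moreover it is not locally symmetric, since a totally geodesic submanifold of a locally symmetric space is again locally symmetric, hence of constant scalar curvature when connected, whereas the fibre is totally geodesic with scalar curvature non-constant in $r$. Together with irreducibility, which the holonomy/curvature computation of \cite{Alb2014d} delivers (and which for $K>0$ one can also see directly: $\calT_{S^m}$ is non-compact, while a Riemannian product of two complete Einstein manifolds with common positive Einstein constant is compact by Bonnet-Myers), Berger's list of K\"ahler holonomy groups leaves only $\mathrm{U}(m)$ and $\SU(m)$; and $\SU(m)$ is excluded because it forces Ricci-flatness. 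Hence the holonomy is $\mathrm{U}(m)$.

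Suppose $m=2$. Then $\mg{\calT_{M,r_0}}$ is non-flat, K\"ahler and Ricci-flat, i.e.\ Calabi-Yau in complex dimension $2$, so its restricted holonomy lies in $\SU(2)$. The proper connected subgroups of $\SU(2)$ are the trivial group, excluded by non-flatness, and the standard $\mathrm{U}(1)$, under which $\C^2$ splits into two $J$-invariant complex lines, so that the metric would be, locally, a Riemannian product of two surfaces; such surfaces, being Ricci-flat, would be flat, contradicting non-flatness. Hence the holonomy is exactly $\SU(2)$, i.e.\ the metric is of Calabi-Yau type. The main obstacle is the Ricci computation of the second paragraph; the one genuinely delicate step is establishing irreducibility when $m\neq2$, which is exactly what turns ``not Ricci-flat'' into ``holonomy $\mathrm{U}(m)$'', and which the case $m=2$ bypasses thanks to the subgroup analysis above.
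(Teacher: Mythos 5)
Your route is genuinely different from the paper's. You aim at Berger's list, which obliges you to establish non-Ricci-flatness, irreducibility and non-local-symmetry; the paper instead writes out the full curvature tensor at the zero-section via \cite[Theorem 2.1]{Alb2014d} and reads the holonomy algebra directly off the span of the curvature operators (an Ambrose--Singer dimension count: the operators $R^{\mg{\tm}}(e_i,e_j)$, $R^{\mg{\tm}}(e_i,f_j)$ span a space of dimension $m^2=\dim\uni(m)$ when $m\neq2$, and a $3$-dimensional, hence $\sul(2)$, space when $m=2$), thereby bypassing Berger, irreducibility and local symmetry altogether. Your $K=0$ case, your non-flatness argument via the totally geodesic fibre of scalar curvature $m(m-1)K/\sqrt{c_1}$ at the origin, your non-local-symmetry argument, and your subgroup analysis of $\SU(2)$ are all correct.

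The genuine gap sits in your second paragraph, on which the rest depends. You assert, as the outcome of an unperformed ``routine'' computation, that $\mg{\tm}$ is Einstein with constant $(m-2)K/2\sqrt{c_1}$. The paper claims only that the \emph{necessary} condition of \cite[Corollary 2.1]{Alb2014d} is satisfied with that constant and explicitly leaves the K\"ahler--Einstein question open; worse, your own Bonnet--Myers observation refutes the global Einstein claim when $m>2$ and $K>0$: there the metric is complete and $\calT_{S^m}$ is non-compact, so it cannot be Einstein with a positive constant. Consequently (a) your irreducibility argument for $K>0$ collapses, since it rests on that false premise, and the fallback ``the computation of \cite{Alb2014d} delivers it'' is not an argument --- yet irreducibility is precisely what the Berger route cannot do without when $m\neq2$; and (b) for $m=2$ you need Ricci-flatness at \emph{every} point, not merely on the zero-section, and this too is only asserted (it happens to be true, but you have not shown it). What survives unrepaired is ``not Ricci-flat for $m\neq2$'', which already follows from the curvature at $r=0$. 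To close the gap, replace the Berger machinery by the paper's direct count of linearly independent curvature endomorphisms at $r=0$, which yields $\uni(m)$, respectively $\sul(2)$, without ever invoking irreducibility.
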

\begin{proof}
The result will be achieved by computing the local holonomy on the zero-section ($r=0$). Applying \cite[Theorem 2.1]{Alb2014d} to the obvious vector bundle and metric with curvature $R^M(z,w)=-K(z\wedge w)$, and weights induced by $\varphi_1=\log\mu$, $\varphi_2=\log\lambda$, we find with respect to the canonical decomposition $H^\na\oplus V$:
\[ R^{\mg{\tm}}(z^h,w^h)=\left[\begin{array}{cc}
                            \sqrt{c_1}R^M & 0 \\ 0 & \frac{1}{\sqrt{c_1}}(R^M)^v
                             \end{array}\right] ,\  R^{\mg{\tm}}(z^v,w^v)=\left[\begin{array}{cc}
    \frac{1}{\sqrt{c_1}}(R^M)^h & 0 \\
    0 & -\frac{K}{c_1\sqrt{c_1}}z^v\wedge w^v
                         \end{array}\right]   \]
and 
\[   R^{\mg{\tm}}(z^h,w^v)=\left[\begin{array}{cc}
                            0& -Q \\  Q^t &0
                             \end{array}\right]      \] 
where $Q$ corresponds with
\[ \mg{\tm}(R^{\mg{\tm}}(z^h,w^v)x^h,y^v) =\frac{K}{2c_1}\sqrt{c_1}\langle z^h,x^h\rangle_{_M}\langle w^v,y^v\rangle_{_M}-\frac{K}{2\sqrt{c_1}}\langle z^v\wedge x^v,w^v\wedge y^v\rangle_{_M}   .  \]
These formulae show $\mg{\tm}$ is indeed flat if $K=0$. 

Now we find that $R^{\mg{\tm}}(z^v,w^v)=\frac{1}{c_1}R^{\mg{\tm}}(z^h,w^h)$, \,$\forall z,w\in TM$. Furthermore, 
\[ R^{\mg{\tm}}(z^h,w^v)x^h=\frac{K}{2}(\langle z^h,x^h\rangle w^v-\langle z^v,w^v\rangle x^v+\langle x^v,w^v\rangle z^v)  .  \]
In an $\mg{\tm}_0$-orthonormal frame $e_i,f_i=Be_i$ of horizontals and verticals we see the latter is
\[ R^{\mg{\tm}}(e_i,f_j)=\frac{K}{2}(f_j\otimes e^i-\delta_{ij}B+f_i\otimes e^j) .  \]
These maps are all linearly independent when we restrict to $i\leq j,\ m\neq2$. Indeed their \textit{trace} is a multiple of $2B-mB$, since $B=\sum f_k\otimes e^k$. 

Finally, if $m\neq2$, the space of skew-symmetric curvature tensors is spanned by the two linearly independent subspaces of endomorphisms found above. Their number clearly adds to $m^2$, this is, to $\dim\uni(m)$. Recall we already know the holonomy algebra is contained in $\uni(m)$. If $m=2$, then the holonomy algebra has dimension $m^2-1=3$ and so it is $\sul(2)$.
\end{proof}
It is agreed the term \textit{Calabi-Yau} is reserved for compact manifolds.

One is questioned if the above metric, in case $m=2$, agrees with the well-known Eguchi-Hanson metric on $\calT_{S^2}$, which in turn is the Stenzel metric.
\begin{teo}
  $\mg{\calT_{S^2}}$ coincides with the Eguchi-Hanson or Stenzel metric on $\calT_{S^2}$.
\end{teo}
\begin{proof}
Let us recall the canonical $\mg{\tm}_0$-orthonormal coframe $\alpha_0,\alpha_1,\alpha_2$ on the unit tangent sphere bundle $S{S^2}$ of $S^2$. $\alpha_0,\alpha_1$ are horizontal, $\alpha_0$ is dual to $B^\mathrm{ad}\xi$ and $\alpha_1=\alpha_2\circ B$. Now $TS^2\setminus0$ identifies with $\SO(3)\times(0,+\infty)$, so we find for, say $K=1,c_1=1$, our metric \eqref{theKahlermetric} can be written as
\[  \mg{\tm}=\sqrt{1+r^2}(\alpha_0^2+\alpha_1^2)+\frac{1}{\sqrt{1+r^2}}(\dx r)^2+\frac{r^2}{\sqrt{1+r^2}}\alpha_2^2 . \]
Letting $r=\sinh\tau$ we get
\[  \mg{\tm}=\cosh\tau\,(\alpha_0^2+\alpha_1^2)+\cosh\tau\,(\dx\tau)^2+\sinh\tau\tanh\tau\,\alpha_2^2 . \]
This is the formula for the referred Stenzel metric in the last page of \cite{Sten}. A formula which identifies the metric with the Eguchi-Hanson metric in dimension $m=2$.
\end{proof}
Notice we have given a new proof that the Stenzel metric has $\SU(2)$ holonomy, because the last result does not recur to the specific complex quadric in $\C^3$ which identifies with $\calT_{S^2}$.

We remark also $\calT_{H^2,1}$ may be described analogously with the elliptic trigonometric functions.

It is still an open question how to find the complex coordinates of the space $\tm$ in general. Complex geo\-me\-try properties, like pseudo-convexity, holomorphic completeness and the derived functors from holomorphic to real-base structures, are quite non-trivial issues if the fibres are non-compact and non-complex. Not only the sheaf of germs of holomorphic functions $\calO$ is unknown, also the sheaves $R^q\pi_*\calO$ are very difficult to grasp.

We have the following result, independent of $\lambda,\mu$ in \eqref{themetric}.
\begin{prop}
 Given any two functions $f_1,f_2$ on $\tm$, every real $m$-plane $P$, or $JP$, of the form
\begin{equation}
 P_{f_1,f_2}=\{f_1x^h+f_2x^v:\ x\in TM\}
\end{equation}
is an $\R$-Lagrangian tangent distribution. In particular, $H^\na$ and $V$ are $\R$-Lagrangian distributions and hence the zero-section and the fibres of $\tm$ are totally geodesic $\R$-Lagrangian submanifolds.
\end{prop}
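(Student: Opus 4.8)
The strategy is to reduce the whole statement to the identity $\omega=\mu\lambda\,\omega_0$ of \eqref{expressaodeomega}, together with the elementary behaviour of $\omega_0$ on horizontal and vertical lifts, while observing that no closedness of $\omega$ is needed (which matches the claimed independence of $\lambda,\mu$): only pointwise non-degeneracy of $\omega$ is used, and that holds because $\mu\lambda>0$. First I recall, from the short computation following \eqref{expressaodeomega}, that $\omega_0(w^v,z^h)=-\pi^*g_{_M}(w^h,z^h)$, and that $\omega_0$ vanishes on any pair of horizontal lifts and on any pair of vertical lifts, since $J_0$ exchanges $H^\na$ and $V$ while these two sub-bundles are $\mg{\tm}_0$-orthogonal.

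The core step is to restrict $\omega$ to $P_{f_1,f_2}$. Fix $u\in\tm$, write $f_i=f_i(u)$, and take two elements $a=f_1x^h+f_2x^v$ and $b=f_1y^h+f_2y^v$ with $x,y\in T_{\pi(u)}M$. Bilinearity together with the vanishing of $\omega_0$ on the $(h,h)$ and $(v,v)$ parts leaves only $f_1f_2\bigl(\omega_0(x^h,y^v)+\omega_0(x^v,y^h)\bigr)$, and by skew-symmetry and the formula above this equals $f_1f_2\bigl(\pi^*g_{_M}(x^h,y^h)-\pi^*g_{_M}(x^h,y^h)\bigr)=0$. The conceptual content is the familiar fact that the graph of a \emph{symmetric} bilinear form is isotropic for the canonical pairing; here $\pi^*g_{_M}$ plays that role inside the splitting $H^\na\oplus V$, on which $\omega_0$ is, up to sign, precisely that pairing. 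Provided $(f_1,f_2)$ is nowhere $(0,0)$, the map $x\mapsto f_1x^h+f_2x^v$ is injective, so $P_{f_1,f_2}$ has real rank $m=\tfrac12\dim_\R\tm$ and is therefore Lagrangian.

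For $JP_{f_1,f_2}$ there are two equivalent routes. Directly, $J(f_1x^h+f_2x^v)=-f_2\tfrac{\lambda}{\mu}x^h+f_1\tfrac{\mu}{\lambda}x^v$, so $JP_{f_1,f_2}$ is again a plane of the prescribed form (with weights $-f_2\lambda/\mu$ and $f_1\mu/\lambda$, still nowhere both zero), and the previous paragraph applies verbatim. Alternatively, $\mg{\tm}$ is Hermitian, hence $\omega=J\lrcorner\mg{\tm}$ is $J$-invariant, so $\omega$ annihilates $JP$ exactly when it annihilates $P$. Specialising to $(f_1,f_2)=(1,0)$ and $(0,1)$ shows $H^\na$ and $V$ are Lagrangian distributions. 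The zero-section is tangent to $H^\na$ (its differential has trivial vertical component, the vertical projection being $\na^*_\cdot\xi$, which annihilates it since $\xi$ vanishes identically on the zero-section), and each fibre $\pi^{-1}(x)$ is an integral manifold of $V$; both are already known to be totally geodesic (\cite[Proposition 1.3]{Alb2014d} and Section 4), so combining the two facts yields the final assertion.

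I do not anticipate a genuine obstacle; the only points requiring care are keeping track that no closedness of $\omega$ enters (so the result is truly independent of $\lambda,\mu$), stating the nowhere-vanishing hypothesis on $(f_1,f_2)$ that the rank count needs, and being clear that this proposition establishes only the Lagrangian half of the last sentence, the totally-geodesic half being imported from \cite{Alb2014d} and the earlier sections.
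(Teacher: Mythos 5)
Your proof is correct and is essentially the paper's argument: both amount to the one-line verification that $\omega$ restricts to zero on $P_{f_1,f_2}$ using the horizontal--vertical splitting, the paper computing $\mg{\tm}(J\cdot,\cdot)$ directly so that the cancellation appears as $\lambda^2\tfrac{\mu}{\lambda}-\mu^2\tfrac{\lambda}{\mu}=0$, while you factor through $\omega=\mu\lambda\,\omega_0$ and cancel the two cross terms by the symmetry of $g_{_M}$ --- the same computation in different packaging. Your added remarks (the nowhere-vanishing of $(f_1,f_2)$ for the rank count, the closed form of $JP_{f_1,f_2}$, and that the totally geodesic claim is imported from \cite{Alb2014d}) are accurate and only make explicit what the paper leaves implicit.
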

\begin{proof}
For any $x,x_1\in TM$, we have
\begin{eqnarray*}
 \omega(f_1x^h+f_2x^v,f_1x^h_1+f_2x^v_1)&=& \mg{\tm}(f_1\frac{\mu}{\lambda} x^v-f_2\frac{\lambda}{\mu} x^h,f_1x^h_1+f_2x^v_1) \\
 &=& (\lambda^2\frac{\mu}{\lambda}-\mu^2\frac{\lambda}{\mu})f_1f_2g_{_M}(x,x_1)\ = \ 0.
\end{eqnarray*}
For the Riemannian geometry of the fibres or the zero-section cf. \cite{Alb2014d}.
\end{proof}

In the study of fibre bundles with holomorphic structures we have thus three classes of $\C$-analytic spaces with three distinct features. Firstly, we have the many holomorphic vector bundles over a complex manifold, with all objects holomorphic. Secondly, we have the Riemannian twistor bundle over a given Riemannian manifold or the symplectic twistor bundle, with fibre the Siegel domain, over a given symplectic manifold endowed with a symplectic connection. In twistor theory the bundle projection is not holomorphic, since the base may not even be complex. But the fibres are always complex submanifolds. Now, in the third remarkable class, we have $\tm,\mg{\tm},J$, an invariantly defined Hermitian structure, yet a real fibre bundle with fibres just real submanifolds, well-defined even for $m$ odd.

\vspace{3mm}
\begin{center}
\textbf{5. ON A RIEMANN SURFACE}
\end{center}
\setcounter{section}{5}

We start with a remark on a complex integrable system question. Let us consider a real $2n$-manifold $Y$ and its complexified tangent bundle. Suppose we are given $n$ linearly independent commuting vector fields $\Upsilon_i,\ i=1,\ldots,n$, which span $TY^c:=TY\otimes_\R\C$ over $\C$ together with their conjugates. Then these $\Upsilon_1,\ldots,\Upsilon_n$ generate the distribution of $(0,1)$-vector fields of an almost-complex structure $J$ on $Y$. Indeed, we may see $J$ is real and satisfies $J^2=-1$. Also $J$ is integrable, since $[\Upsilon_i,\Upsilon_j]=0,\ \forall i,j$, applying Newlander-Nirenberg Theorem. In general, however, the vector fields $\Upsilon_i$ are not a $(0,1)$-\textit{totally commuting complex frame field}. By this we mean a frame of commuting $(0,1)$-vector fields commuting also with their conjugates. Now, how to find such a frame, that is the question. Complex charts do exist and so a solution exists. But it is not easier to solve such problem independently.

For example, over the 2-plane disk $\{z\in\R^2:\ |z|<1\}$, let $\Upsilon=\overline{z}\partial_z+\partialoverlinez$. Then $\Upsilon$ and $\overline{\Upsilon}$ are linearly independent and hence $\Upsilon$ is $(0,1)$ for an integrable complex structure $J$. We have $[\Upsilon,\overline{\Upsilon}]=\overline{z}\partialoverlinez-z\partial_z$. On the other hand, on some open subset, a $J$-complex chart is given by $\phi(z,\overline{z})=\overline{z}^2-2z$ and hence the desired solution is $\papa{ }{\overline{\phi}}$. Any $(0,1)$-totally complex field is both a multiple $f\Upsilon$, for some $\cinf{}$ function $f$, and a holomorphic multiple of $\papa{ }{\overline{\phi}}$. In particular, we have the solution $f=\frac{-1}{2(1-|z|^2)}$ (one just has to solve $f\Upsilon(\overline{\phi})=1$). On the other hand, searching merely for $f\in\cinf{}(\C)$ such that $[f\Upsilon,\overline{f\Upsilon}]=0$ corresponds with a solution of the, indeed, more complicated equation
\begin{equation}
 |z|^2f\papa{\overline{f}}{z}+zf\papa{\overline{f}}{\overline{z}}- z\overline{f}\papa{f}{\overline{z}}- \overline{f}\papa{f}{z}+\overline{z}|f|^2=0 .
\end{equation}
Is $f=\frac{1}{1-|z|^2}$, up to a constant factor, the only solution of this equation?

For $n=1$, in general, we recall that given a Riemann surface with complex chart $z$ then finding the complex chart for another complex structure is solved by the Beltrami equation. Locally, every $J$ corresponds to an $\Upsilon=\partialoverlinez-\mu\partial_z$ with $\mu\in\cinf{}(\C)$ such that $|\mu|<1$.

Let us now resume with the geometry of $\tm$ as found in Theorem \ref{IntegrabilidadedeSasakigenemdiscobundle}, where $M$ is a Riemann surface of constant sectional curvature $K=\pm1$. We know how to normalise $K$, so $M$ is essentially $\C\Proj^1$ or $D_1\subset\R^2$.

We may assume to have isothermal coordinates of $M$ and hence take a complex chart $z$ on an open subset $U\subset M$ where the metric is $g_{_M}=\frac{2}{(1\pm|z|^2)^2}\dx z\odot\dx \overline{z}=\frac{4(\dx x^2+\dx y^2)}{(1\pm x^2\pm y^2)^2}$. We thus work on the complexified tangent bundle of $M$; we easily see the Levi-Civita connection is given by (as usual $\dx z=\dx x+\sqrt{-1}\dx y$, dual to $\partial_z=\frac{1}{2}(\partial_x-\sqrt{-1}\partial_y)$, $\partialoverlinez=\overline{\partial_z}$)
\begin{equation}
  \na_z\partial_z=\mp\frac{2\overline{z}}{1\pm|z|^2}\partial_z,\qquad\na_z\partialoverlinez=0 .
\end{equation}
Since the metric is K\"ahler on $M$, we have a complex chart $(z,w)$ of the tangent bundle manifold $\tm$, over the subset $\inv{\pi}(U)$, such that $\pi$ is the $1^{\text{st}}$-projection and the vertical tangent subspace $V^c$ is spanned by $\partial_w,\partialoverlinew$. Admitting we had found the horizontal sub-bundle $H^\na$, we then have the mirror map $B\in\mathrm{End}\,(T\tm)^c$ and the induced connection $\na^*$ respecting the canonical decomposition. Immediately,
since $B\partial_z=\partial_w$,
\begin{equation}
 \na^*\partial_w=B\na^*\partial_z\qquad
 \na^*\partialoverlinew=B\na^*\partialoverlinez .
\end{equation}
The tautological vector field $\xi$ on $\tm$ clearly satisfies
\begin{equation}
\xi_{(z,w)}=w\partial_w+\overline{w}\partialoverlinew .  
\end{equation}
Solving $\na^*_X\xi=0$ in the unknown $X=a_1\partial_z+a_2\partialoverlinez+b_1\partial_w+b_2\partialoverlinew$, we obtain a complex basis of $({H^\na})^c$:
\begin{equation}\label{complexbasisofHcomplex}
 X_1=\partial_z\pm \frac{2w\overline{z}}{1\pm|z|^2}\partial_w,\qquad
 X_2=\partialoverlinez\pm \frac{2\overline{w}z}{1\pm|z|^2}\partialoverlinew
 =\overline{X}_1 .
\end{equation}
Now we have the adjoint of $B$ respecting the canonical decomposition and so, for any given real function $a$ of $(z,w)$, we may consider the almost-complex structure
\begin{equation}
 J=aB-\inv{a}B^\mathrm{ad}.
\end{equation}
\begin{teo}
 The structure $J$ is integrable if and only if with any constant $c_1>0$
 \begin{equation}\label{formulaofa}
  a=\sqrt{c_1\pm\frac{4|w|^2}{(1\pm|z|^2)^2}}  .
 \end{equation}
\end{teo}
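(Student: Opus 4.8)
The plan is to prove this twice, as is the custom in the paper: the first proof a translation of Theorem~\ref{IntegrabilidadedeSasakigenemdiscobundle} into the chart $(z,w)$, the second a direct verification. For the \emph{first proof}, recall that the base has been normalised to constant sectional curvature $K=\pm1$, so Theorem~\ref{IntegrabilidadedeSasakigenemdiscobundle} tells us that $J=aB-\inv{a}B^\mathrm{ad}$ is integrable on a disk bundle exactly when $a$ is a function of $r=\|u\|_{_M}$ alone with $a^2=c_1+Kr^2$, $c_1>0$. It then suffices to rewrite $r^2$ in the chart. Since $\xi_{(z,w)}=w\partial_w+\overline w\partialoverlinew$ is the vertical lift of $w\partial_z+\overline w\partialoverlinez\in T_zM$ and $r^2=\pi^\star g_{_M}(\xi,\xi)$, only the value $g_{_M}(\partial_z,\partialoverlinez)=\frac{2}{(1\pm|z|^2)^2}$ is needed, read off from the given form of $g_{_M}$; hence $r^2=2|w|^2 g_{_M}(\partial_z,\partialoverlinez)=\frac{4|w|^2}{(1\pm|z|^2)^2}$, and putting $K=\pm1$ gives exactly \eqref{formulaofa}, with $c_1>0$ the condition that $a$ be positive on the zero-section.

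For the \emph{second proof}, I would first write out $J$ on the complex frame $X_1,X_2,\partial_w,\partialoverlinew$ of $(T\tm)^c$. As $B$ carries the horizontal lift of a vector to its vertical lift, $BX_1=\partial_w$, $BX_2=\partialoverlinew$, $B\partial_w=B\partialoverlinew=0$, and pairing against $\mg{\tm}_0$ --- for which $B$ restricts to an isometry of $H^\na$ onto $V$ --- gives $B^\mathrm{ad}\partial_w=X_1$, $B^\mathrm{ad}\partialoverlinew=X_2$, $B^\mathrm{ad}X_1=B^\mathrm{ad}X_2=0$. Hence $JX_1=a\partial_w$, $JX_2=a\partialoverlinew$, $J\partial_w=-\inv{a}X_1$, $J\partialoverlinew=-\inv{a}X_2$, so the $(0,1)$-distribution is spanned by $W_1=X_1+\sqrt{-1}\,a\partial_w$ and $W_2=X_2+\sqrt{-1}\,a\partialoverlinew$. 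By the Newlander--Nirenberg theorem, $J$ is integrable if and only if $[W_1,W_2]$ is again of type $(0,1)$.

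Next I would expand $[W_1,W_2]$, using $[X_1,\partialoverlinew]=[\partial_w,X_2]=[\partial_w,\partialoverlinew]=0$ and $[X_1,X_2]=-\calri(X_1,X_2)$, the vertical base-curvature term, which a short computation (from the explicit coefficients, or from the constant-curvature value of $\pi^*R$) evaluates to $\mp\frac{2}{(1\pm|z|^2)^2}\bigl(w\partial_w-\overline w\partialoverlinew\bigr)$. The result is \emph{purely vertical}, of the shape $[X_1,X_2]+\bigl(\sqrt{-1}\,X_1(a)-a\partial_w a\bigr)\partialoverlinew+\bigl(a\partialoverlinew a-\sqrt{-1}\,X_2(a)\bigr)\partial_w$; since any nonzero $(0,1)$-vector --- being a combination of $W_1,W_2$ --- has a nonzero horizontal part, integrability is equivalent to this vertical vector vanishing, i.e. to the pair of equations
\begin{equation*}
 a\partial_w a-\sqrt{-1}\,X_1(a)=\pm\tfrac{2}{(1\pm|z|^2)^2}\,\overline w,\qquad a\partialoverlinew a-\sqrt{-1}\,X_2(a)=\pm\tfrac{2}{(1\pm|z|^2)^2}\,w .
\end{equation*}

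Finally I would solve this system. Conjugating the first equation ($a$ is real and $\overline{X_1}=X_2$) and comparing with the second forces $X_1(a)=X_2(a)=0$ --- consistent with $a$ depending only on $r$, which is constant along horizontal lifts --- and leaves the system $\partial_w(a^2)=\pm\frac{4\overline w}{(1\pm|z|^2)^2}$, $\partialoverlinew(a^2)=\pm\frac{4w}{(1\pm|z|^2)^2}$, $\partial_z(a^2)=-P\,\partial_w(a^2)$ and its conjugate, where $P=\pm\frac{2w\overline z}{1\pm|z|^2}$ is the vertical coefficient of $X_1$. Integrating in $w$ and then imposing compatibility with the remaining equations pins down $a^2=c_1\pm\frac{4|w|^2}{(1\pm|z|^2)^2}$ with $c_1$ an integration constant, which must be positive for $a$ to be a well-defined positive function (take $w=0$); conversely, this $a$ solves all the equations, hence $[W_1,W_2]=0$ and $J$ is integrable. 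The only real obstacle is the bookkeeping in the second proof --- carrying the base-curvature term, the derivatives $X_j(a)$, and the two independent $\pm$ sign families through the computation simultaneously --- since the resulting system is linear in $a^2$ and integrates at once.
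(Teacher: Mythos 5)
Your second proof is essentially the paper's argument: the paper likewise reduces integrability to the single Newlander--Nirenberg condition $N(X_1,X_2)=0$ (equivalently, involutivity of the span of $\Xi_1,\Xi_2$), arrives at the same two-equation system --- whose first equation is exactly $X_1(a)=0$ and whose second collapses to $\partial_w(a^2)=\pm 4\overline{w}/(1\pm|z|^2)^2$ once the first is used to cancel the remaining terms --- and integrates it to \eqref{formulaofa}; your bracket computations $[X_1,X_2]=\pm\frac{2}{(1\pm|z|^2)^2}(\overline{w}\partialoverlinew-w\partial_w)$ and $[X_1,\partialoverlinew]=[X_2,\partial_w]=0$ agree with the paper's \eqref{expected2}. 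One caveat on your first proof: it cannot by itself deliver the ``only if'' direction, because Theorem \ref{IntegrabilidadedeSasakigenemdiscobundle} presupposes that $a$ is a function of $r$ alone, whereas the present theorem allows $a$ to be an arbitrary real function of $(z,w)$ --- the paper explicitly calls this result a ``partial improvement'' for exactly that reason --- so it is your direct verification, not the translation of the earlier theorem, that actually proves the full statement.
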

\begin{proof}
The previous arguments in Theorem \ref{IntegrabilidadedeSasakigenemdiscobundle} regarding the Nijenhuis tensor apply again. In particular, just one equation decides all. Solving $N(X_1,X_2)=0$, we find
 \begin{equation}
  \begin{cases}
   (1\pm|z|^2)\papa{a}{z}\pm2w\overline{z}\papa{a}{w}=0\\
   (1\pm|z|^2)^2a^2\papa{a}{w}\mp2\overline{w}a
   \pm2z\overline{w}(1\pm|z|^2)\papa{a}{z}+4|w|^2|z|^2\papa{a}{w}=0
  \end{cases}
 \end{equation}
whose \textit{unique} solution is the function $a$ in \eqref{formulaofa}. Notice the first equation yields the last summands in the second to cancel.
\end{proof}
This result is a partial improvement of Theorem \ref{IntegrabilidadedeSasakigenemdiscobundle} since the solution $a$ is not supposed \textit{a priori} to be dependent uniquely of $r=\|\xi\|=\frac{2|w|}{1\pm|z|^2}$. Further generality must follow from a not pre-arranged base metric.

The domain restrictions of the referred Theorem must apply again; we assume them from now on and work with $J$ and $a=\sqrt{c_1\pm r^2}$ defined on the disk bundle $\tm={\calT}_{M,r_0}$. For the negative curvature case, we have $r_0=\sqrt{c_1}$. Notice when $|z|\rr1$ we have $w$ in disk fibres of ray going to infinite in $|\cdot|$-norm.

Let us denote ($\Upsilon_i=X_i+\sqrt{-1}JX_i$)
\begin{equation}
 \Upsilon_1=X_1+\sqrt{-1}a\partial_w, \qquad\Upsilon_2=X_2+\sqrt{-1}a\partialoverlinew  .
\end{equation}
Then $\Upsilon_1,\Upsilon_2$ span the vector bundle $T^{0,1}\tm$ of $-\sqrt{-1}$-eigenvectors of $J$.
\begin{prop}
 $f\in\cinf{\inv{\pi}(U)}(\C)$ is $J$-holomorphic if and only if $\Upsilon_1(f)=0$, $\Upsilon_2(f)=0$.
\end{prop}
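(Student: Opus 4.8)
The plan is to reduce the statement to the definition of holomorphicity together with the spanning property of $\Xi_1,\Xi_2$ recorded just above. Recall that $f\colon(\tm,J)\rr(\C,\sqrt{-1})$ is $J$-holomorphic exactly when its differential is complex linear, i.e. $\dx f\circ J=\sqrt{-1}\,\dx f$ as $\C$-valued $1$-forms on $\inv{\pi}(U)$. Extending $\dx f$ complex-linearly to $T\tm\otimes_\R\C$, I would first observe that this identity is equivalent to the vanishing of $\overline\partial_J f$, the $(0,1)$-component of $\dx f$; that is, to $\dx f(W)=0$ for every $W$ in the $-\sqrt{-1}$-eigenbundle $T^{0,1}\tm$ of $J$.

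The one verification worth spelling out is this equivalence. If $\dx f$ vanishes on $T^{0,1}\tm$, then decomposing an arbitrary real tangent vector as $V=V^{1,0}+V^{0,1}$ with $V^{0,1}=\overline{V^{1,0}}\in T^{0,1}\tm$, one gets $\dx f(JV)=\sqrt{-1}\,\dx f(V^{1,0})-\sqrt{-1}\,\dx f(V^{0,1})=\sqrt{-1}\,\dx f(V)$; conversely, feeding $W\in T^{0,1}\tm$ into $\dx f\circ J=\sqrt{-1}\,\dx f$ and using $JW=-\sqrt{-1}\,W$ forces $\dx f(W)=0$. This is a purely infinitesimal statement and needs no integrability of $J$; the earlier Newlander--Nirenberg argument only adds, moreover, that honest local holomorphic charts exist, so that the sheaf of such $f$ is not trivial.

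It then remains to translate ``$\dx f$ vanishes on $T^{0,1}\tm$'' into the explicit frame: since $\Xi_1,\Xi_2$ span $T^{0,1}\tm$ over $\inv{\pi}(U)$, this condition is equivalent to $\Xi_1f=0$ and $\Xi_2f=0$, and the proposition follows. There is essentially no obstacle here; the only point deserving a line is that $\{\Xi_1,\Xi_2\}$ is an everywhere linearly independent frame for $T^{0,1}\tm$ on all of $\inv{\pi}(U)$ — equivalently, that the function $a$ of \eqref{formulaofa} is strictly positive throughout — which is immediate from \eqref{formulaofa} together with the domain restriction $r<r_0$ ($r_0=\sqrt{c_1}$ when $K<0$, and $r_0=+\infty$ otherwise) already in force.
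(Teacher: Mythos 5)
Your argument is correct and is precisely the reasoning the paper leaves implicit: the proposition is stated as an immediate consequence of the preceding remark that $\Xi_1,\Xi_2$ span $T^{0,1}\tm$, and your spelled-out equivalence between $\dx f\circ J=\sqrt{-1}\,\dx f$ and the vanishing of $\dx f$ on the $-\sqrt{-1}$-eigenbundle is the standard (and intended) justification. Your added remark that $a>0$ on the admissible disk bundle, guaranteeing that $\{\Xi_1,\Xi_2\}$ is genuinely a frame, is a sensible precaution the paper does not bother to record.
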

All questions are driven into the realm of complex variables, if we could find holomorphic charts. Some remarkable relations are at hand.
\begin{prop}
 Letting $\mathfrak{z}=1\pm|z|^2$, the following are satisfied:
 \begin{equation}\label{expected1}
    \papa{a}{z}=-\frac{4\overline{z}|w|^2}{a\mathfrak{z}^3}\qquad\quad
   \papa{a}{w}=\pm\frac{2\overline{w}}{a\mathfrak{z}^2}\qquad\quad X_1(a)=X_2(a)=0 
\end{equation}
\begin{equation}\label{expected2}
  [X_1,X_2]=\pm\frac{2}{\mathfrak{z}^2}(\overline{w}\partialoverlinew-w\partial_w)= [a\partial_w,a\partialoverlinew]\quad\qquad 
  [X_1,\partialoverlinew]=[X_2,\partial_w]=0  
\end{equation}
\begin{equation}
 [\Upsilon_1,\Upsilon_2]=0\qquad\quad
 [\Upsilon_1,\overline{\Upsilon}_2]=\sqrt{-1}\,2a[\partial_w,X_1]=\pm\sqrt{-1}\,\frac{4a\overline{z}}{\mathfrak{z}}\partial_w
\end{equation}
\begin{equation}
 [\overline{\Upsilon}_1,\overline{\Upsilon}_1]=2[X_1,X_2]=-
 [\overline{\Upsilon}_2,\overline{\Upsilon}_2] .
\end{equation}
\end{prop}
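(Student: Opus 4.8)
The plan is to verify each identity by direct computation in the complex chart $(z,w)$ of $\tm$, using the explicit formula \eqref{formulaofa} for $a$ together with the explicit expressions \eqref{complexbasisofHcomplex} for $X_1,X_2$. Writing $a^2=c_1\pm\frac{4w\overline{w}}{\mathfrak{z}^2}$ with $\mathfrak{z}=1\pm z\overline{z}$, I would first differentiate: from $\partial_z\mathfrak{z}=\pm\overline{z}$ one gets $2a\,\papa{a}{z}=\pm 4w\overline{w}\,(-2)\mathfrak{z}^{-3}(\pm\overline{z})=-8|w|^2\overline{z}\,\mathfrak{z}^{-3}$, where the two sign choices multiply to a plus; this is the first formula in \eqref{expected1}. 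Likewise $2a\,\papa{a}{w}=\pm 4\overline{w}\,\mathfrak{z}^{-2}$ gives the second, and conjugating (using that $a$ and $\mathfrak{z}$ are real) gives $\papa{a}{\overline{w}}=\pm\frac{2w}{a\mathfrak{z}^2}$. Substituting into $X_1=\partial_z\pm\frac{2w\overline{z}}{\mathfrak{z}}\partial_w$ yields $X_1(a)=-\frac{4|w|^2\overline{z}}{a\mathfrak{z}^3}\pm\frac{2w\overline{z}}{\mathfrak{z}}\Bigl(\pm\frac{2\overline{w}}{a\mathfrak{z}^2}\Bigr)=0$, and then $X_2(a)=\overline{X_1(a)}=0$ since $X_2=\overline{X}_1$ and $a$ is real.

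For \eqref{expected2} I would set $P=\pm\frac{2w\overline{z}}{\mathfrak{z}}$, so that $X_1=\partial_z+P\partial_w$ and $X_2=\partialoverlinez+\overline{P}\partialoverlinew$. Because $P$ does not involve $\overline{w}$ and $\overline{P}$ does not involve $w$, the Lie bracket collapses to $[X_1,X_2]=\partial_z(\overline{P})\,\partialoverlinew-\partialoverlinez(P)\,\partial_w$. The computation hinges on the simplification $\partialoverlinez\left(\frac{\overline{z}}{\mathfrak{z}}\right)=\frac{\mathfrak{z}\mp z\overline{z}}{\mathfrak{z}^2}=\frac{1}{\mathfrak{z}^2}$ (using $\partialoverlinez\mathfrak{z}=\pm z$ and $\mathfrak{z}\mp|z|^2=1$), whence $\partialoverlinez(P)=\pm\frac{2w}{\mathfrak{z}^2}$, $\partial_z(\overline{P})=\pm\frac{2\overline{w}}{\mathfrak{z}^2}$, and $[X_1,X_2]=\pm\frac{2}{\mathfrak{z}^2}(\overline{w}\partialoverlinew-w\partial_w)$. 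The equality $[a\partial_w,a\partialoverlinew]=a\,\papa{a}{w}\,\partialoverlinew-a\,\papa{a}{\overline{w}}\,\partial_w$ then follows immediately from the $\papa{a}{w}$-formula and its conjugate, and $[X_1,\partialoverlinew]=[X_2,\partial_w]=0$ is immediate since $P$ is independent of $\overline{w}$ (resp. $\overline{P}$ of $w$).

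The identities for $\Xi_1=X_1+\sqrt{-1}\,a\,\partial_w$ and $\Xi_2=X_2+\sqrt{-1}\,a\,\partialoverlinew$ then follow from $\C$-bilinearity of the Lie bracket together with the identities already established. Expanding, $[\Xi_1,\Xi_2]=\bigl([X_1,X_2]-[a\partial_w,a\partialoverlinew]\bigr)+\sqrt{-1}\bigl([X_1,a\partialoverlinew]+[a\partial_w,X_2]\bigr)$; the parenthesised real part vanishes by \eqref{expected2}, while $[X_1,a\partialoverlinew]=X_1(a)\partialoverlinew+a[X_1,\partialoverlinew]=0$ and $[a\partial_w,X_2]=-X_2(a)\partial_w-a[X_2,\partial_w]=0$, so $[\Xi_1,\Xi_2]=0$ — this re-establishes the integrability of $J$ independently. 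Next, $\overline{\Xi}_2=X_1-\sqrt{-1}\,a\,\partial_w$, so $[\Xi_1,\overline{\Xi}_2]=-2\sqrt{-1}\,[X_1,a\partial_w]=2\sqrt{-1}\,a\,[\partial_w,X_1]$, and $[\partial_w,X_1]=\partial_w(P)\,\partial_w=\pm\frac{2\overline{z}}{\mathfrak{z}}\partial_w$, which gives the stated value; the remaining bracket relations of the last line are obtained the same way, their cross terms again killed by $X_1(a)=X_2(a)=0$ and $[X_1,\partialoverlinew]=[X_2,\partial_w]=0$, with $[X_1,X_2]$ supplied by \eqref{expected2}.

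Everything here is routine; the only real care needed is bookkeeping — keeping track of which of the variables $z,\overline{z},w,\overline{w}$ each coefficient depends on, so that the many vanishing bracket terms may be discarded, and checking that the two sign conventions corresponding to $K=\pm1$ combine consistently at each step, the recurring collapses being $\mathfrak{z}\mp|z|^2=1$ and $(\pm)(\pm)=+$. I do not anticipate any conceptual obstacle.
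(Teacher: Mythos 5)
Your computation is correct and is precisely the routine chart verification the paper leaves to the reader (the paper offers no written proof, only the remark that $X_i(a)=0$ was expected from $\dx a=2a'\xi^\flat$ with $X_i$ horizontal, and that $[X_1,X_2]$ being the stated vertical field is the vertical part of the torsion identity \eqref{fundamentalidentity}). One caveat: the last displayed line as printed reads $[\overline{\Xi}_1,\overline{\Xi}_1]$ and $[\overline{\Xi}_2,\overline{\Xi}_2]$, which would be identically zero; your argument (correctly) proves the intended statement $[\Xi_1,\overline{\Xi}_1]=2[X_1,X_2]=-[\Xi_2,\overline{\Xi}_2]$, and it would be worth flagging that typographical correction explicitly.
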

 After checking these simple computations, one may also observe the following. Since the $X_i$ from \eqref{complexbasisofHcomplex} are horizontal and $a$ found in \eqref{formulaofa} depends only of $r$, the right hand side of \eqref{expected1} comes as expected from \eqref{derivativeofr}. The first equation of \eqref{expected2} is the vertical part of \eqref{fundamentalidentity}. More importantly, a $(0,1)$-{totally commuting complex frame} does not appear easily.

\vspace{7mm}

The research leading to these results has received funding from the People Programme (Marie Curie Actions) of the European Union's Seventh Framework Programme (FP7/2007-2013) under REA grant agreement nº PIEF-GA-2012-332209.

\vspace{8mm}

\ 

\textsc{R. Albuquerque}

{\texttt{rpa@uevora.pt}}
Centro de Investiga\c c\~ao em Mate\-m\'a\-ti\-ca e Aplica\c c\~oes

Universidade de \'Evora, Portugal


\begin{thebibliography}{10}




\bibitem{Alb2008}
R. Albuquerque,
{\em Hermitian and quaternionic Hermitian structures on tangent bundles},
Geom. Dedicata, Vol. 133, 1, April 2008, 95--110.

\bibitem{Alb2012}
R. Albuquerque,
\emph{Weighted metrics on tangent sphere bundles},
{Quart. J. Math.} {63} (2) 2012, 259--273.


\bibitem{Alb2014d}
{R. Albuquerque},
{\em On vector bundle manifolds with spherically symmetric metrics},
Ann. Global Anal. Geom.,  vol. 51, Issue 2, 2017, 129--154.


\bibitem{Chavel}
I. Chavel, 
{\em Riemannian Geometry A modern Introduction}, 2nd ed., CUP, 2006.


\bibitem{Kow1}
O. Kowalski,
{\em Curvature of the induced Riemannian metric of the tangent bundle of a Riemannian manifold},
J. Reine Angew. Math. 250, 1971, 124--129.


\bibitem{Oproiu1}
V. Oproiu,
{\em Some new geometric structures on the tangent bundle},
Publ. Math. Debrecen 55, 1999, 3-4, 261--281.

\bibitem{Oproiu2}
V. Oproiu, 
{\em A Kaehler Einstein structure on the tangent bundle of a space form},
Int. J. Math. Sci. 25 (3), 2001, 183--195.


\bibitem{OproiuPapag}
V. Oproiu and N. Papaghiuc,
{\em General natural Einstein K\"ahler structures on tangent bundles},
Differential Geom. Appl. 27, 2009, 384--392.


\bibitem{Sasa}
S. Sasaki,
{\em On the differential geometry of tangent bundles of Riemannian manifolds}, 
T\^ohoku Math. J. 10, 1958, 338--354.

\bibitem{Sten}
M. B. Stenzel, 
{\em Ricci-flat metrics on the complexification of a compact rank one symmetric
space},
Manuscripta Math. 80 (1993), no. 2, 151--163.



\end{thebibliography}
\end{document}